\theoremstyle{plain}
\newtheorem{theorem}{Theorem}[section]
\newtheorem{lemma}[theorem]{Lemma}
\newtheorem{corollary}[theorem]{Corollary}
\theoremstyle{definition}
\newtheorem*{remark}{Remark}
\newtheorem*{acknowledgment}{Acknowledgment}
\newcommand{\m}{\mathfrak{m}}
\newcommand{\M}{\mathfrak{M}}
\newcommand{\J}{\mathfrak{J}}
\newcommand{\Z}{\mathbb{Z}}
\newcommand{\1}{\textbf{1}}
\newcommand{\e}{\textbf{e}}
\newcommand{\Supp}{\mathrm{Supp}}
\newcommand{\Aut}{\mathrm{Aut}}
\newcommand{\Ker}{\mathrm{Ker}}
\newcommand{\Inn}{\mathrm{Inn}}
\newcommand{\Mat}{\mathrm{Mat}}
\newcommand{\ve}{\varepsilon}
\newcommand{\F}{\mathbb{F}}
\newcommand{\wh}{\widehat}
\newcommand{\p}{\mathfrak{p}}
\begin{document}
\title[Jacobson graph]{Isomorphisms between Jacobson graphs}
\author[A. Azimi, A. Erfanian and M. Farrokhi D. G.]{A. Azimi, A. Erfanian and M. Farrokhi D. G.}

\subjclass[2000]{Primary 05C25, 05C60, Secondary 16P10, 13H99, 16N20.}
\keywords{Jacobson graph, isomorphism, automorphism, wreath product}

\address{Department of Pure Mathematics, Ferdowsi University of Mashhad, Mashhad, Iran}
\email{ali.azimi61@gmail.com}
\address{Department of Pure Mathematics, Ferdowsi University of Mashhad, Mashhad, Iran}
\email{erfanian@math.um.ac.ir}
\address{Department of Pure Mathematics, Ferdowsi University of Mashhad, Mashhad, Iran}
\email{m.farrokhi.d.g@gmail.com}

\begin{abstract}
Let $R$ be a commutative ring with a non-zero identity and $\J_R$ be its Jacobson graph. We show that if $R$ and $R'$ are finite commutative rings, then $\J_R\cong\J_{R'}$ if and only if $|J(R)|=|J(R')|$ and $R/J(R)\cong R'/J(R')$. Also, for a Jacobson graph $\J_R$, we obtain the structure of group $\Aut(\J_R)$ of all automorphisms of $\J_R$ and prove that under some conditions two semi-simple rings $R$ and $R'$ are isomorphic if and only if $\Aut(\J_R)\cong\Aut(\J_{R'})$.
\end{abstract}
\maketitle
\section{introduction}
Let $\Gamma$ and $\Gamma'$ be two simple graphs. An \textit{isomorphism} between $\Gamma$ and $\Gamma'$ is a bijection $\alpha:V(\Gamma)\longrightarrow V(\Gamma')$ such that
\[\{u,v\}\in E(\Gamma)\Longleftrightarrow\{\alpha(u),\alpha(v)\}\in E(\Gamma').\]
Two graphs $\Gamma$ and $\Gamma'$ are isomorphic whenever there is an isomorphism between them, and the isomorphism is denoted by $\Gamma\cong \Gamma'$. If $\Gamma'=\Gamma$, then an isomorphism between $\Gamma$ and $\Gamma'$ is said to be an \textit{automorphism} of $\Gamma$. The set of all automorphisms of $\Gamma$ is denoted by $\Aut(\Gamma)$.

Let $R$ and $R'$ be commutative rings with non-zero identities. The zero-divisor graph $\Gamma(R)$ of $R$ is a graph whose vertices are non-zero zero-divisors and two distinct vertices $x$ and $y$ are adjacent if and only if $xy=0$. Anderson, Frazier, Lauve and Livingston \cite{dfa-af-al-psl} proved that if $R$ and $R'$ are finite reduced rings which are not fields, then $\Gamma(R)\cong\Gamma(R')$ if and only if $R\cong R'$. Later, Akbari and Mohammadian \cite{sa-am} generalized this result and proved that if $R$ is a finite reduced ring which is not isomorphic to $\Z_2\oplus\Z_2$ and $\Z_6$ and $R'$ is a ring such that $\Gamma(R)\cong \Gamma(R')$, then $R\cong R'$.

Anderson and Livingston \cite{dfa-psl} have shown that $\Aut(\Gamma(\Z_n))$ is a direct product of some symmetric groups when $n\geq4$ is a non-prime integer. In the case of non-commutative rings, Han \cite{jh} shows that $\Aut(\Gamma(R))\cong S_{p+1}$ is the symmetric group of degree $p+1$ when $R=\Mat_2(\Z_p)$ ($p$ prime), and Park and Han \cite{sp-jh} generalized Han's result and proved that $\Aut(\Gamma(R))\cong S_{|F|+1}$ when $R=\Mat_2(F)$ with $F$ a finite field.
  
Let $R$ be a commutative ring with non-zero identity. The \textit{Jacobson graph} of $R$, denoted by $\J_R$, is a graph whose vertex set is $R\setminus J(R)$ such that two distinct vertices $x,y\in V(\J_R)$ are adjacent whenever $1-xy\not\in U(R)$, in which $U(R)$ is the group of units of $R$. It is easy to see that the subgraph of $\J_R$ induced by all vertices $x$ such that $1-x^2$ is a unit, is the co-normal product of some graphs, each of which is a union of isomorphic complete bipartite graphs. Recall that the Jacobson radical $J(R)$ of $R$ is the intersection of all maximal ideals of $R$ and it has the property that $1-xr\in U(R)$ whenever $x\in J(R)$ and $r\in R$. A ring with trivial Jacobson radical is called a \textit{semi-simple} ring.

The Jacobson graphs first introduced in \cite{aa-ae-mfdg} by the authors and several graph theoretical properties as well as numerical invariants of such graphs is discovered. In this paper, we shall concentrate on isomorphisms between Jacobson graphs. Also, we shall determine the structure of automorphism group of Jacobson graphs and give some conditions under which two rings are isomorphic whenever the automorphism group of their Jacobson graphs are isomorphic. In this paper, all rings will be commutative rings with non-zero identity.

Let $\Gamma$ be a graph. In what follows, $N_{\Gamma}(x)$ and $N_{\Gamma}[x]$ or simply $N(x)$ and $N[x]$ stand for the open and closed neighborhood of $x$, respectively, for every vertex $x$ of $\Gamma$. Also, $\deg_\Gamma(x)$ or simply $\deg(x)$ denotes the degree of $x$ and $\delta(\Gamma)$ denotes the minimum vertex degree of $\Gamma$.
\section{Isomorphic Jacobson graphs}
We begin with recalling some results from ring theory. Let $R$ be a finite ring. Then, by \cite[Theorem VI.2]{brm}, $R=R_1\oplus\cdots\oplus R_n$, where $(R_i,\m_i)$ are local rings with associated fields $F_i$. Moreover,
\[J(R)=J(R_1)\oplus\cdots\oplus J(R_n)\]
and
\[U(R)=U(R_1)\oplus\cdots\oplus U(R_n).\]
Clearly, 
\[|U(R)|=\prod_{i=1}^n|U(R_i)|=|J(R)|\prod_{i=1}^n\left(|F_i|-1\right).\]
In what follows, $\1$ stands for the element $(1,\ldots,1)$ with all entries equal to $1$ and $\e_i$ is the element with $1$ on its $i$th entry and $0$ elsewhere, for $i=1,\ldots,n$. Also, $e(R)$ and $o(R)$ denote the number of $F_i$ such that $|F_i|$ is even and odd, respectively. Note that, by \cite[Theorem 5.8]{aa-ae-mfdg}, for a unit element $x\in R$, $\deg_{\J_R}(x)=\left|\bigcup_{i=1}^n\M_i\right|-\ve_x$, where $\ve_x=0$ if $1-x^2\in U(R)$, $\ve_x=1$ if $1-x^2\notin U(R)$ and $\M_1,\ldots,\M_n$ are the maximal ideals of $R$. In particular, 
\[\deg_{\J_R}(\1)=\left|\bigcup_{i=1}^n\M_i\right|-1=|R|-|J(R)|\prod_{i=1}^n\left(|F_i|-1\right)-1.\]
Now, we shall study isomorphisms between Jacobson graphs. It is easy to see that $\Z_4$ and $\Z_2[x]/(x^2)$ have the same Jacobson graphs while they are non-isomorphic rings. The following theorem gives necessary and sufficient conditions under which two rings have isomorphic same Jacobson graphs.
\begin{theorem}
Let $R$ and $R'$ be finite rings. Then  $\J_R\cong\J_{R'}$  if and only if $|J(R)|=|J(R')|$ and $R/J(R)\cong R'/J(R')$.
\end{theorem}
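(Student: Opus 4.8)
The plan is to exploit the fact that, since $J(R)$ is the Jacobson radical, $1-xy$ lies in $U(R)$ if and only if its image modulo $J(R)$ lies in $U(R/J(R))$. Writing $\overline{R}=R/J(R)$ and $\overline{x}$ for the image of $x$, adjacency of two vertices depends only on the pair $(\overline{x},\overline{y})$, and the vertex set $R\setminus J(R)$ is the disjoint union of the fibers $S_{\overline{x}}$ over the nonzero elements $\overline{x}\in\overline{R}$, each of cardinality $q:=|J(R)|$. Thus $\J_R$ is a blow-up of a ``reduced graph with loops'' $\overline{\J}$ on $\overline{R}\setminus\{0\}$: two distinct fibers $S_{\overline{x}},S_{\overline{y}}$ are joined completely when $1-\overline{x}\,\overline{y}\notin U(\overline{R})$, a fiber $S_{\overline{x}}$ induces a clique when $1-\overline{x}^{2}\notin U(\overline{R})$ (a loop of $\overline{\J}$) and an independent set otherwise. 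Since $\overline{R}\cong F_1\oplus\cdots\oplus F_n$ by the decomposition recalled above, one computes that $\overline{x}\sim\overline{y}$ exactly when $a_ib_i=1$ in some coordinate, where $\overline{x}=(a_1,\dots,a_n)$ and $\overline{y}=(b_1,\dots,b_n)$; this explicit description will drive the combinatorics.

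For the easy direction, suppose $q=|J(R)|=|J(R')|$ and fix a ring isomorphism $\phi\colon\overline{R}\to\overline{R'}$. Because $\phi$ carries units to units and satisfies $\phi(1-\overline{x}\,\overline{y})=1-\phi(\overline{x})\phi(\overline{y})$, it is an isomorphism of the reduced graphs respecting loops. Choosing any bijection $R\to R'$ that sends each fiber $S_{\overline{x}}$ onto $S'_{\phi(\overline{x})}$ (possible since all fibers have the common size $q$) then yields a graph isomorphism $\J_R\cong\J_{R'}$, as the edge condition is read off coordinatewise from $\phi$ for pairs in distinct fibers and from the loop data for pairs inside a fiber.

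The substance is the converse: recovering $q$ and the isomorphism type of $\overline{R}$ from $\J_R$ alone. I would first isolate the fibers graph-theoretically via the twin relation, declaring $u\equiv v$ when $u,v$ have equal neighbourhoods off each other; each twin class is a union of fibers and is either a clique or an independent set, thereby recording the loop data of $\overline{\J}$. Contracting twin classes should return a graph closely related to $\overline{\J}$, while the common fiber size delivers $q$. With $q$ in hand, the degree formula $\deg_{\J_R}(x)=q\,D(\overline{x})-\ell(\overline{x})$, where $D(\overline{x})=m-\prod_{i\in I}(|F_i|-1)\prod_{j\notin I}|F_j|$ depends only on the support $I$ of $\overline{x}$ (with $m=\prod_i|F_i|$) and $\ell(\overline{x})\in\{0,1\}$ is the loop indicator, lets me translate the degree multiplicities of $\J_R$ into the elementary symmetric data of the multiset $\{|F_1|,\dots,|F_n|\}$. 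Reconstructing that multiset---hence $\overline{R}\cong\prod_iF_i$ up to isomorphism---would then proceed by induction on $n$, peeling off the extremal vertices (full support, giving the maximal value of $D$) from the small-support ones.

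The main obstacle is precisely this reconstruction step. Distinct residue classes can fall into a single twin class, so extracting $q$ as a ``fiber size'' is not automatic and must be argued, likely by exhibiting a canonical twin class that is a single fiber or by an extremal/gcd argument on twin-class sizes. One must also prove uniqueness: no two non-isomorphic semi-simple rings may produce the same reduced graph, which amounts to showing that the degree and adjacency statistics determine $\{|F_i|\}$. I expect the genuinely delicate cases to be the small parameters $|F_i|\in\{2,3\}$, where every nonzero element is its own inverse so that loops proliferate and supports degenerate, together with $q$ small---exactly the regime in which the $\Z_4$ versus $\Z_2[x]/(x^2)$ coincidence lives; these will require separate, hands-on verification.
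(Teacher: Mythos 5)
Your reduction framework is sound: the observation that $1-xy\in U(R)$ iff its image is a unit in $R/J(R)$, the blow-up description of $\J_R$ over the reduced graph on $R/J(R)\setminus\{0\}$, the degree formula $\deg_{\J_R}(x)=q\,D(\overline{x})-\ell(\overline{x})$, and the easy direction are all correct (indeed the easy direction is spelled out in more detail than the paper's ``the converse is clear''). But the converse, which is the entire content of the theorem, is left as a program, and you yourself flag its two load-bearing steps as unresolved. First, extracting $q=|J(R)|$ from the graph: your twin-class route is genuinely obstructed, since twin classes are only unions of fibers and can merge distinct residue classes (your $\Z_2\oplus\Z_2$-type degeneration is real, and it is exactly why the paper needs the hypothesis $R\not\cong\Z_2\oplus\Z_2$ in its Lemma 3.1 on open neighborhoods). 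The paper sidesteps twin classes entirely: unit vertices realize the two largest degree values, so any isomorphism $\varphi$ maps units onto units, giving $|U(R)|=|U(R')|$ and $\deg(\1_R)=\deg(\1_{R'})$; combined with equality of vertex counts these two equations force $|R|=|R'|$ and $|J(R)|=|J(R')|$ by a short computation, with no canonical-fiber argument needed.

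Second, the reconstruction of the multiset $\{|F_1|,\ldots,|F_n|\}$: you assert that degree multiplicities encode the elementary symmetric data and that an induction ``peeling off extremal vertices'' will finish, but no such translation is proved, and it is not clear degree statistics alone suffice, since $D(\overline{x})$ depends only on the support of $\overline{x}$ and different field-order multisets could a priori produce compatible degree multisets. The paper's induction is of a different and more concrete kind: when all $|F_i|\leq3$ it avoids degrees altogether via $2^{e(R)}3^{o(R)}=|R/J(R)|$, whose unique factorization determines the quotient; when $|F_m|>3$ it identifies the largest field from $\delta(\J_R)=|R|/|F_m|-1$, and then --- this is the step your sketch has no counterpart for --- it builds an explicit isomorphism between the Jacobson graphs of the complementary summands $S=R_1\oplus\cdots\oplus R_{m-1}$ and $S'$, namely $x\mapsto\varphi(x+a_m^{-1}\e_m)\cdot(\1_{R'}-\e'_n)$ with $a_m$ chosen so that $1-a_m^2\in U(R_m)$, so that the inductive hypothesis applies to genuinely smaller rings. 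Without some analogue of this restriction map (or a completed proof that degree data determine the $|F_i|$, including your admitted delicate cases $|F_i|\in\{2,3\}$), the proposal does not yet prove the converse.
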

\begin{proof}
Let $\varphi:\J_R\longrightarrow\J_{R'}$ be a graph isomorphism. Since $R$ and $R'$ are finite rings, we may write $R=R_1\oplus\cdots\oplus R_m$ and $R'=R'_1\oplus\cdots\oplus R'_n$, where $R_i$ and $R'_j$ are local rings with associated fields $F_i$ and $F'_j$, respectively. We further assume that $|F_1|\leq\cdots\leq|F_m|$ and $|F'_1|\leq\cdots\leq|F'_n|$. It is easy to see that the unit vertices have largest degrees among all other vertices and that there is two possible values for the degree of such elements (see \cite[Theorem 5.8]{aa-ae-mfdg}). Hence $\varphi$ maps the units of $R$ onto the units of $R'$ and consequently $|U(R)|=|U(R')|$. In particular, we have $\deg(\1_R)=\deg(\1_{R'})$. Therefore,
\[|J(R)|\prod_{i=1}^m\left(|F_i|-1\right)=|J(R')|\prod_{j=1}^n\left(|F'_j|-1\right)\]
and
\[|R|-|J(R)|\prod_{i=1}^m\left(|F_i|-1\right)-1=|R'|-|J(R')|\prod_{j=1}^n\left(|F'_j|-1\right)-1,\]
from which it follows that $|R|=|R'|$ and consequently $|J(R)|=|J(R')|$ as the graphs have the same number of vertices.

If $|F_m|\leq3$ and $|F'_n|\leq3$, then 
\[2^{e(R)}3^{o(R)}=\left|\frac{R}{J(R)}\right|=\left|\frac{R'}{J(R')}\right|=2^{e(R')}3^{o(R')},\]
which implies that $R/J(R)\cong R'/J(R')$. Now, suppose that $|F_m|>3$. It is easy to see that
\[\delta(\J_R)=\deg_{\J_R}(\e_m)=\frac{|R|}{|F_m|}-1\]
and
\[\delta(\J_{R'})=\deg_{\J_{R'}}(\e'_n)=\frac{|R'|}{|F'_n|}-1.\]
Since $\delta(\J_R)=\delta(\J_{R'})$ we obtain $|F_m|=|F'_n|$ and hence $F_m\cong F'_n$. Let $S=R_1\oplus\cdots\oplus R_{m-1}$ and $S'=R'_1\oplus\cdots\oplus R'_{n-1}$. By \cite[Lemma 2.1(2) and Theorem 2.2]{aa-ae-mfdg}, there exist elements $a_m\in R_m\setminus J(R_m)$ and $b_n\in R'_n\setminus J(R'_n)$ such that $1-a_m^2\in U(R_m)$ and $1-b_n^2\in U(R'_n)$. Without loss of generality, we may assume that $\varphi(a_m\e_m)=b_n\e'_n$. Since $\varphi(N_{\J_R}(a_m\e_m))=N_{\J_{R'}}(b_n\e'_n)$, the map
\[\begin{array}{rcl}\phi:\J_S&\longrightarrow&\J_{S'}\\x&\longmapsto&\varphi(x+a_m^{-1}\e_m)\cdot(\1_{R'}-\e'_n)\end{array}\]
is an isomorphism between $\J_S$ and $\J_{S'}$. Using induction we may assume that $S/J(S)\cong S'/J(S')$, from which it follows that $R/J(R)\cong R'/J(R')$.
The converse is clear.
\end{proof}
\begin{corollary}
Let $R$ and $R'$ be finite semi-simple rings. Then $\J_R\cong\J_{R'}$ if and only if $R\cong R'$.  
\end{corollary}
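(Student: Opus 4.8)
The plan is to read the statement off the preceding theorem, since the semi-simplicity hypothesis collapses both of its conditions to trivialities. First I would invoke the definition recalled in the introduction: a semi-simple ring is precisely one with trivial Jacobson radical, so $J(R)=\{0\}$ and $J(R')=\{0\}$. Consequently $|J(R)|=|J(R')|=1$, and the first condition of the theorem, namely $|J(R)|=|J(R')|$, holds automatically for any pair of semi-simple rings.

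Next I would observe that $R/J(R)\cong R$ and $R'/J(R')\cong R'$, so the theorem's second condition $R/J(R)\cong R'/J(R')$ is equivalent to $R\cong R'$. Feeding these two reductions into the theorem, the stated equivalence $\J_R\cong\J_{R'}\Longleftrightarrow\bigl(|J(R)|=|J(R')|\text{ and }R/J(R)\cong R'/J(R')\bigr)$ specializes to $\J_R\cong\J_{R'}\Longleftrightarrow R\cong R'$, which is exactly the assertion.

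There is no genuine obstacle here: the whole content has already been carried by the theorem, and this corollary merely instantiates it in the case $J(R)=J(R')=0$. The only point meriting a moment's care is confirming that \emph{semi-simple} is being used in the sense $J(R)=\{0\}$ (as defined in the introduction) rather than some other convention, but with that definition the argument goes through verbatim and the proof is a single appeal to the theorem.
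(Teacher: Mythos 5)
Your proof is correct and matches the paper's intent exactly: the paper states this corollary without proof as an immediate specialization of the preceding theorem, and your argument — that $J(R)=J(R')=0$ makes the condition $|J(R)|=|J(R')|$ automatic and reduces $R/J(R)\cong R'/J(R')$ to $R\cong R'$ — is precisely that specialization.
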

\begin{remark}
As infinite fields with the same characteristic and cardinality have isomorphic Jacobson graphs, finiteness is a necessary condition in the above theorem.
\end{remark}
\section{Automorphisms of Jacobson graphs}
In what follows, the \textit{support} of an element $x=(x_1,\ldots,x_n)$ in a semi-simple ring $R=F_1\oplus\cdots\oplus F_n$, where $F_i$ are fields, is the set of all indices $i$ such that $x_i\neq0$ and it is denoted by $\Supp(x)$. Our results are based on the following two lemmas.
\begin{lemma}\label{openneighborhood}
Let $R$ be a finite semi-simple ring such that $R\not\cong\Z_2\oplus\Z_2$ and $R$ is not a field. If $x,y\in V(\J_R)$, then $N(x)=N(y)$ if and only if $x=y$. 
\end{lemma}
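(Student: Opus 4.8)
The plan is to exploit the completely explicit description of adjacency available in a semi-simple ring. Write $R=F_1\oplus\cdots\oplus F_n$ with the $F_i$ fields; since $J(R)=0$, the vertices of $\J_R$ are the non-zero elements, and an element is a unit exactly when all of its coordinates are non-zero. Hence for vertices $x,z$ one has $1-xz\notin U(R)$ if and only if $x_iz_i=1$ for some $i$, so that
\[z\in N(x)\iff z\neq x\ \text{and}\ z_i=x_i^{-1}\ \text{for some}\ i\in\Supp(x).\]
The implication $x=y\Rightarrow N(x)=N(y)$ is trivial, so I would assume $N(x)=N(y)$ with $x\neq y$ and derive a contradiction. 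The workhorses of the argument are the single-support test vertices $c\e_i$ with $c\in F_i\setminus\{0\}$: a direct check using the displayed criterion shows $c\e_i\in N(x)$ precisely when $i\in\Supp(x)$, $c=x_i^{-1}$, and $c\e_i\neq x$.

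First I would dispose of the case where one of $x,y$, say $x$, has $|\Supp(x)|\geq2$. For each $i\in\Supp(x)$ the vertex $x_i^{-1}\e_i$ differs from $x$ (their supports have different sizes), so it lies in $N(x)=N(y)$; since it has support $\{i\}$, adjacency to $y$ forces $i\in\Supp(y)$ and $y_i=x_i$. This gives $\Supp(x)\subseteq\Supp(y)$ with $x$ and $y$ agreeing on $\Supp(x)$. If also $|\Supp(y)|\geq2$, the symmetric argument yields the reverse inclusion, whence $\Supp(x)=\Supp(y)$ and $x=y$, a contradiction; and the mixed possibility $|\Supp(x)|\geq2>1=|\Supp(y)|$ is impossible, since it would force $\Supp(x)\subseteq\Supp(y)$. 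Thus one is reduced to $|\Supp(x)|=|\Supp(y)|=1$.

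In the remaining case write $x=a\e_i$ and $y=b\e_k$ with $a,b\neq0$, so that $N(x)=\{z:z_i=a^{-1}\}\setminus\{x\}$ and $N(y)=\{z:z_k=b^{-1}\}\setminus\{y\}$. I would split according to whether $a^2=1$ and $b^2=1$, the relevant point being that $x$ itself lies in $\{z:z_i=a^{-1}\}$ exactly when $a^2=1$. When at least one of $a^2,b^2$ differs from $1$, the two neighbourhoods are coordinate hyperplanes (one possibly punctured at a single point), and comparing them — using that $\{z:z_i=c\}\subseteq\{z:z_k=d\}$ can hold only for $i=k$ and $c=d$ — leads in every configuration to a contradiction (in the fully unpunctured subcase directly to $x=y$). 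The genuinely delicate subcase is $a^2=b^2=1$ with $i\neq k$, where $N(x)$ and $N(y)$ are the punctured hyperplanes $\{z:z_i=a\}\setminus\{x\}$ and $\{z:z_k=b\}\setminus\{y\}$: here I would exhibit an explicit separating vertex $z$ with $z_i=a$, $z_k\neq b$, and $z\neq x$. Such a $z$ can be built whenever $|F_k|\geq3$, or whenever there is a third coordinate $l\neq i,k$ available to support $z$; the sole configuration admitting no separating vertex is $n=2$ with $F_1=F_2=\Z_2$, that is $R\cong\Z_2\oplus\Z_2$. This last subcase is precisely where the hypotheses enter and is the main obstacle: the exclusion of $\Z_2\oplus\Z_2$ kills it directly, while the assumption that $R$ is not a field (forcing $n\geq2$, so that the punctured hyperplanes are non-empty and the earlier hyperplane comparisons are non-degenerate) is exactly what makes the preceding cases go through.
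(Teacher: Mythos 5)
Your strategy---the explicit coordinatewise adjacency criterion, the test vertices $c\e_i$, the reduction to vertices of singleton support, and an endgame that isolates $\Z_2\oplus\Z_2$---is exactly the route the paper takes (its proof uses $x_i^{-1}\e_i$ when $x_i\neq\pm1$ and $x_i\e_i$ when $|\Supp(x)|\geq2$ to force $x=\pm\e_i$, $y=\pm\e_j$, then compresses the rest into ``a simple verification''), and every case you treat checks out. However, your case split has one uncovered configuration, and it is precisely the one where the hypothesis that $R$ is not a field does its real work: singleton supports with $a^2=b^2=1$ \emph{and} $i=k$, i.e.\ $x=a\e_i$ and $y=-a\e_i$ with $|F_i|$ odd (since $x\neq y$ forces $b=-a\neq a$). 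Your ``at least one of $a^2,b^2$ differs from $1$'' block excludes this, and your ``genuinely delicate subcase'' is explicitly restricted to $i\neq k$, so as written the proof is incomplete. This is not a case one may silently skip: for $n=1$ it is exactly where the lemma fails, because in a field of odd order $1$ and $-1$ are isolated vertices with $N(1)=N(-1)=\emptyset$. Fortunately your own tools dispatch it in one line: $N(x)\subseteq\{z:z_i=a\}$ and $N(y)\subseteq\{z:z_i=-a\}$ lie in disjoint hyperplanes, so $N(x)=N(y)$ would force both neighbourhoods to be empty, whereas $n\geq2$ gives $a\e_i+\e_l\in N(x)$ for any $l\neq i$.

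A smaller point to tighten when you write this up: your inclusion test ``$\{z:z_i=c\}\subseteq\{z:z_k=d\}$ only if $i=k$ and $c=d$'' is proved by the witness $c\e_i$, which is valid for full hyperplanes; in the mixed punctured/unpunctured configurations the witness $c\e_i$ on the punctured side can coincide with the removed point $x$, so the separating vertex should be drawn from the unpunctured side (e.g.\ $b^{-1}\e_k\in N(y)$, which is legitimate since $b^{-1}\e_k\neq y$ exactly because $b^2\neq1$). With the missing subcase added and this witness chosen on the correct side, your argument is complete and coincides in substance with the paper's proof, merely making explicit the verification the paper leaves to the reader.
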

\begin{proof}
Let $R=F_1\oplus\cdots\oplus F_n$ be the decomposition of $R$ into finite fields $F_i$. Suppose that $x=(x_1,\ldots,x_n)$ and $y=(y_1,\ldots,y_n)$ are vertices of $\J_R$ such that $N(x)=N(y)$ but $x\neq y$. Then there exists $i\in\{1,\ldots,n\}$ such that $x_i\neq y_i$ and without loss of generality $x_i\neq0$. If $x_i\neq\pm1$, then $x_i^{-1}\e_i\in N(x)$ so that $y_i=x_i$, a contradiction. Hence $x_i=\pm1$. If $|\Supp(x)|\geq2$, then since $x_i\e_i\in N(x)$ we have that $y_i=x_i$, a contradiction. Thus $\Supp(x)=\{i\}$ and $x=\pm\e_i$. Similarly $y=\pm\e_j$. Now a simple verification shows that $N(x)=N(y)$ only if $R=\Z_2\oplus\Z_2$, which is a contradiction. The converse is obvious.
\end{proof}
\begin{lemma}\label{closedneighborhood}
Let $R$ be a finite semi-simple ring that is not a field. If $x,y\in V(\J_R)$, then $N[x]=N[y]$ if and only if $x=y$. 
\end{lemma}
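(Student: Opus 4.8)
The plan is to work with the concrete description $R=F_1\oplus\cdots\oplus F_n$ with each $F_i$ a finite field and $n\geq2$ (since $R$ is not a field), so that $V(\J_R)=R\setminus\{0\}$ and, because $U(R)=\prod_iU(F_i)$, two vertices $x=(x_1,\dots,x_n)$ and $y=(y_1,\dots,y_n)$ are adjacent exactly when $x_iy_i=1$ for some $i$. The first step is to record that if $x\neq y$ and $N[x]=N[y]$, then $x$ and $y$ must be adjacent: otherwise $x\in N[x]\setminus N[y]$. Consequently $N[x]=N[y]$ forces $z\sim x\Leftrightarrow z\sim y$ for every vertex $z\notin\{x,y\}$, i.e. $x$ and $y$ are true twins, and the whole problem reduces to showing that $\J_R$ has no pair of distinct true twins.

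The main tool will be the single-coordinate test vertices $x_i^{-1}\e_i$ for $i\in\Supp(x)$. Such a vertex is adjacent to $x$ (through coordinate $i$) and, having support $\{i\}$, is adjacent to $y$ only when $y_i=x_i$. Hence, whenever $x_i^{-1}\e_i\notin\{x,y\}$, the twin condition immediately yields $y_i=x_i$. I would then split on the support sizes. If $|\Supp(x)|\geq2$ and $|\Supp(y)|\geq2$, then every test vertex $x_i^{-1}\e_i$ (support size $1$) differs from both $x$ and $y$, so $y_i=x_i$ for all $i\in\Supp(x)$; the symmetric argument gives $x_i=y_i$ for all $i\in\Supp(y)$, and both coordinates vanish off $\Supp(x)\cup\Supp(y)$, forcing $x=y$.

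The delicate point, and the main obstacle, is the case of small supports, where the natural test vertex can coincide with $x$ or $y$. Using the symmetry of the statement I would take $\Supp(x)$ to be of minimal size, the interesting situation being $\Supp(x)=\{s\}$. Adjacency through the single available coordinate forces $y_s=x_s^{-1}$, hence $s\in\Supp(y)$; and if $|\Supp(y)|\geq2$ the test vertices $y_i^{-1}\e_i$ for $i\in\Supp(y)\setminus\{s\}$ still avoid $\{x,y\}$ and force $y_i=x_i$, which is impossible since $x_i=0$ while $y_i\neq0$. The genuinely tight case is $x=a\e_s$, $y=a^{-1}\e_s$ with $a\neq\pm1$, where the coordinate-$s$ test vertices are precisely $x$ and $y$. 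Here I would replace the single-coordinate witness by a two-coordinate one, $z=a^{-1}\e_s+c\e_k$ with $k\neq s$ and $c\neq0$; such $k$ exists exactly because $n\geq2$. One checks that $z$ is adjacent to $x$ (coordinate $s$) but not to $y$ (since $a^{-2}\neq1$), while $\Supp(z)=\{s,k\}$ guarantees $z\notin\{x,y\}$, contradicting $N[x]=N[y]$. This final maneuver is precisely where the hypothesis that $R$ is not a field is used, and it explains why, unlike the open-neighborhood lemma, no exceptional ring such as $\Z_2\oplus\Z_2$ needs to be excluded.
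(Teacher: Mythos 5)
Your proof is correct and follows essentially the same route as the paper's: after reducing adjacency to the coordinatewise condition $x_iy_i=1$, you split on support sizes, use the single-coordinate test vertices $x_i^{-1}\e_i$ (resp.\ $y_j^{-1}\e_j$) exactly as the paper does, and resolve the tight case $x=a\e_s$, $y=a^{-1}\e_s$ with $a\neq\pm1$ by a two-coordinate witness, which matches the paper's witness $x_i\e_i+\e_j$ up to swapping the roles of $x$ and $y$. The only differences are cosmetic (the ``true twins'' framing and the order of the cases), so no further comparison is needed.
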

\begin{proof}
Let $R=F_1\oplus\cdots\oplus F_n$ be the decomposition of $R$ into finite fields $F_i$. Suppose that $x=(x_1,\ldots,x_n)$ and $y=(y_1,\ldots,y_n)$ are vertices of $\J_R$ such that $N[x]=N[y]$ but $x\neq y$. Clearly, $x$ and $y$ are adjacent. If $\Supp(x)=\{i\}$ is a singleton, then $x=x_i\e_i$ and $y_i=x_i^{-1}$. If $y\neq x_i^{-1}\e_i$ and $j\in\Supp(y)\setminus\{i\}$, then $y_j^{-1}\e_j\in N[y]\setminus N[x]$, which is impossible. Thus $y=x_i^{-1}\e_i$. If $x_i\neq\pm1$ and $j\neq i$, then $x_i\e_i+\e_j\in N[y]\setminus N[x]$, which is impossible. Thus $x_i=\pm1$ and hence $x=y$, a contradiction. Thus $|\Supp(x)|>1$ and similarly $|\Supp(y)|>1$. Suppose that $x_i\neq y_i$ and without loss of generality $x_i\neq0$. Then $x_i^{-1}\e_i\in N(x)$, which implies that $x_i^{-1}\e_i\in N(y)$. Hence $y_i=x_i$, which is a contradiction. The converse is obvious.
\end{proof}

Let $R$ be a finite ring and $\theta:\Aut(\J_R)\longrightarrow\Aut(\J_{R/J(R)})$ be the map give by 
$\theta(\alpha)=\overline{\alpha}$ such that $\overline{\alpha}(x+J(R))=\alpha(x)+J(R)$ for every $x\in R\setminus J(R)$. With this definition, $\theta$ gives rise to an epimorphism from $\Aut(\J_R)$ onto $\Aut(\J_{R/J(R)})$. Therefore 
\[\frac{\Aut(\J_R)}{\Ker(\theta)}\cong\Aut\left(\J_{\frac{R}{J(R)}}\right).\]
Moreover, 
\[\Ker(\theta)=S_{|J(R)|}\times\cdots\times S_{|J(R)|}\]
is the direct product of $|R/J(R)|-1$ copies of the symmetric group on $|J(R)|$ letters. So, in the remainder of this section, we just consider the case of finite semi-simple rings.
\begin{theorem}\label{automorphism}
Let $R=F_1\oplus\cdots\oplus F_n$ be the decomposition of semi-simple ring $R$ into finite fields $F_i$  and $|F_1|\geq\cdots \geq |F_n|$. Also, let $A_i=\{j:|F_j|=|F_i|\}$, $\Pi\leq S_n$ be the set of all permutations preserving $A_i$'s, and $\Sigma_i\leq S_{|F_i|}$ be the set of all permutations fixing $0$ and preserving the sets $\{\pm1\}$ and $\{f_i,f_i^{-1}\}$ for all $f_i\in F_i\setminus\{0,\pm1\}$. Then
\[\Aut(\J_R)=\left\{\alpha:\J_R\longrightarrow\J_R:\alpha(\sum_{i=1}^{n}a_i\e_i)=\sum_{i=1}^{n}\sigma_i(a_i)\e_{\pi(i)},\pi\in\Pi,\sigma_i\in \Sigma_i\right\}.\]
\end{theorem}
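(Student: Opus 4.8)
The plan is to prove the two inclusions separately, working throughout in the coordinate description of adjacency. Since $U(R)=U(F_1)\oplus\cdots\oplus U(F_n)$ and a field element is a unit exactly when it is nonzero, two vertices $x=(x_1,\dots,x_n)$ and $y=(y_1,\dots,y_n)$ satisfy $x\sim y$ in $\J_R$ if and only if $x_iy_i=1$ for some $i$. For the inclusion $\supseteq$ I would verify that each candidate map $\alpha(\sum a_i\e_i)=\sum\sigma_i(a_i)\e_{\pi(i)}$ is a well-defined adjacency-preserving bijection. Because $\pi\in\Pi$ forces $|F_i|=|F_{\pi(i)}|$, the fields $F_i$ and $F_{\pi(i)}$ are isomorphic and $\sigma_i$ may be read as a bijection $F_i\to F_{\pi(i)}$; as each $\sigma_i$ fixes $0$, the map $\alpha$ is a bijection of $R$ fixing $0$, hence of $V(\J_R)=R\setminus\{0\}$. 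The key observation is that the defining conditions on $\Sigma_i$ are exactly equivalent to $\sigma_i(0)=0$ together with the requirement $\sigma_i(a)\sigma_i(b)=1\Leftrightarrow ab=1$: preservation of $\{\pm1\}$ handles the self-inverse elements, and preservation of each pair $\{f,f^{-1}\}$ gives $\sigma_i(a^{-1})=\sigma_i(a)^{-1}$. It then follows that $\alpha(x)\sim\alpha(y)$ iff $\sigma_k(x_k)\sigma_k(y_k)=1$ for some $k$ iff $x_ky_k=1$ for some $k$ iff $x\sim y$, so $\alpha\in\Aut(\J_R)$. This direction is routine.

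For the inclusion $\subseteq$, take an arbitrary $\alpha\in\Aut(\J_R)$. The case $n=1$ ($R$ a field) is immediate, since $\J_R$ is a matching on the inverse-pairs with $\pm1$ isolated and its automorphisms are precisely $\Sigma_1$; the case $R\cong\Z_2\oplus\Z_2$ is small and checked by hand. Otherwise $R$ is not a field and Lemmas~\ref{openneighborhood} and~\ref{closedneighborhood} apply. I would first recover the coordinate skeleton by isolating $\pm\e_i$: a vertex $v$ has the property that $N[v]$ is a clique exactly when $v=\pm\e_i$. Indeed $N[\pm\e_i]=\{y:y_i=\pm1\}$ is a clique, whereas for any other $v$ one exhibits two neighbours of $v$ lying in distinct coordinate slices that are mutually non-adjacent. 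Since being a clique is an automorphism invariant, $\alpha$ permutes the set $\{\pm\e_i:1\le i\le n\}$, Lemma~\ref{closedneighborhood} guaranteeing that these vertices are genuinely distinguished by their closed neighbourhoods.

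Next I would extend this to all single-support vertices and extract $\pi$ and the $\sigma_i$. The intended characterization is that a vertex $w$ is single-support precisely when its open neighbourhood is a single coordinate slice $S_{i,c}=\{y:y_i=c\}$, namely $N(c^{-1}\e_i)=S_{i,c}$ for $c\ne\pm1$, together with the closed-neighbourhood description of $\pm\e_i$ already obtained; Lemma~\ref{openneighborhood} then makes each such realizing vertex unique, so that once $\alpha$ is shown to send coordinate slices to coordinate slices, it sends single-support vertices to single-support vertices. That single-support vertices on one coordinate stay together is forced by the clean invariant that two distinct single-support vertices have a common neighbour if and only if they lie on different coordinates (on the same coordinate the slices $S_{i,a^{-1}}$ and $S_{i,b^{-1}}$ are disjoint, while on different coordinates $S_{i,a^{-1}}\cap S_{j,b^{-1}}$ is nonempty). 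This produces a permutation $\pi$; comparing the numbers $|F_i|-1$ of single-support vertices per coordinate gives $|F_i|=|F_{\pi(i)}|$, i.e. $\pi\in\Pi$. Writing $\alpha(a\e_i)=\sigma_i(a)\e_{\pi(i)}$ and $\sigma_i(0)=0$ defines bijections $\sigma_i$; the adjacency $a\e_i\sim a^{-1}\e_i$ forces $\sigma_i(a^{-1})=\sigma_i(a)^{-1}$, and the identification of the $\pm\e_i$ gives $\sigma_i(\{\pm1\})=\{\pm1\}$, so $\sigma_i\in\Sigma_i$. To finish, let $\beta$ be the automorphism of the desired form built from this $\pi$ and these $\sigma_i$; then $\gamma=\beta^{-1}\alpha$ fixes every single-support vertex. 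For any vertex $x$ its single-support neighbours are exactly $\{x_i^{-1}\e_i:i\in\Supp(x)\}$, a set that determines $\Supp(x)$ and every coordinate $x_i$; since $\gamma$ fixes these vertices and preserves adjacency, $\gamma(x)$ has the same single-support neighbours as $x$, whence $\gamma(x)=x$ and $\alpha=\beta$.

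The main obstacle is the coordinate-recognition step: proving rigorously that $\alpha$ respects the decomposition $R=F_1\oplus\cdots\oplus F_n$, i.e. that it carries single-support vertices to single-support vertices and coordinate slices to coordinate slices. This is precisely where the two neighbourhood lemmas are indispensable, supplying the rigidity (absence of nontrivial open or closed twins) that makes the clique and common-neighbour invariants faithful. The delicate points are the degenerate small-field and characteristic-two phenomena, where $1=-1$ so that $S_{i,1}=S_{i,-1}$ and $\e_i=-\e_i$, and where slices become so small that the clique and common-neighbour invariants partially collapse; these must be dealt with case by case, and they are exactly the degeneracies responsible for excluding fields and $\Z_2\oplus\Z_2$.
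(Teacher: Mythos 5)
Your overall architecture is genuinely different from the paper's: the paper proceeds by induction on $n$, using near-minimal degrees to pin down the single-support vertices on the largest field only, then fixing them modulo the candidate group and analysing the coset maps $\alpha_f$ on $f\e_1+S$ with a delicate case analysis (the subcases $S\cong\Z_2\oplus\Z_2$, $S$ a field, and $f=\pm1$), invoking Lemmas \ref{openneighborhood} and \ref{closedneighborhood} there. You instead propose a global, non-inductive argument, and most of its ingredients check out: $N[v]$ is a clique exactly when $v=\pm\e_i$ (for $c\notin\{0,\pm1\}$ the slice $N(c\e_i)=\{y:y_i=c^{-1}\}$ contains the non-adjacent pair $c^{-1}\e_i$ and $c^{-1}\e_i+\e_j$, and a multi-support $v$ has the non-adjacent neighbours $v_i^{-1}\e_i,v_j^{-1}\e_j$); two single-support vertices admit a common neighbour iff they sit on different coordinates; and your endgame is both correct and slicker than the paper's: the single-support neighbours of any vertex $x$ are exactly $\{x_i^{-1}\e_i:i\in\Supp(x)\}$, which determines $x$, so $\gamma=\beta^{-1}\alpha$ fixes everything. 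Filled in, this would bypass the paper's entire coset/case analysis.

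However, there is a genuine gap, and it sits precisely at what you yourself call the main obstacle: you never actually prove that $\alpha$ carries single-support vertices to single-support vertices. Your ``intended characterization'' --- that $w$ is single-support precisely when $N(w)$ is a coordinate slice --- is not a graph-theoretic criterion: ``slice'' presupposes the coordinatization whose $\alpha$-invariance is the very thing to be established, and your phrase ``once $\alpha$ is shown to send coordinate slices to coordinate slices'' is exactly the unproven statement. Lemmas \ref{openneighborhood} and \ref{closedneighborhood} cannot close this: they only rule out distinct vertices with identical (open or closed) neighbourhoods, and say nothing about which vertex subsets are slices; your common-neighbour invariant is likewise unusable before single-supportness of the images is known, since it is formulated only for single-support vertices. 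Note also that crude degree counting will not substitute without care: in $\F_4\oplus\F_3$ the vertices $\omega\e_1$ and $\e_2$ share degree $3$. The step is repairable intrinsically, e.g.: for $w\notin\{\pm\e_i\}$, $w$ has singleton support iff $N(w)\cap N(u)=\emptyset$ for some $u$ whose closed neighbourhood is a clique --- indeed $N(c\e_i)$ with $c\neq\pm1$ misses $N(\e_i)\subseteq\{y:y_i=1\}$, whereas a multi-support $w$ meets every $N(\pm\e_j)$ via an element with $j$-th coordinate $\pm1$ and $k$-th coordinate $w_k^{-1}$ for suitable $k\in\Supp(w)$ (with a little extra care when $n=2$ and the obvious witness collides with $w$). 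With such a criterion inserted, your argument goes through; as written, it does not.
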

\begin{proof}
Let $\Inn(\J_R)$ be the set of automorphisms in the theorem. We show that $\Aut(\J_R)=\Inn(\J_R)$. 
We proceed by induction on $n$. Clearly, the result holds for $n=1$. Hence we assume that $n>1$ and the result holds for $n-1$. Let $\alpha\in\Aut(\J_R)$, and $a,b\in F_1\setminus\{0\}$. Then $\deg(a\e_1),\deg(b\e_1)=|R|/|F_1|-1$ or $|R|/|F_1|$ according to the cases when $a,b=\pm1$ or not, respectively. On the other hand, for a vertex $x\in V(\J_R)$, $\deg(x)=|R|/|F_1|-1$ or $|R|/|F_1|$ only if $x=f_k\e_k$ for some $k\in A_1$. Hence $\alpha(a\e_1)=f_i\e_i$ and $\alpha(b\e_1)=f_j\e_j$ for some $i,j\in A_1$. If $i\neq j$, then $N_{\J_R}(f_i\e_i)\cap N_{\J_R}(f_j\e_j)\neq\emptyset$ while $N_{\J_R}(a\e_1)\cap N_{\J_R}(b\e_1)=\emptyset$, a contradiction. Thus $i=j$ and we may assume modulo $\Inn(\J_R)$ that $\alpha(f\e_1)=f\e_1$ for all $f\in F_1\setminus\{0\}$. In particular, $\alpha(N_{\J_R}(f\e_1))=N_{\J_R}(f\e_1)$ for all $f\in F_1\setminus\{0\}$. Let $S=F_2\oplus\cdots\oplus F_n$. Since $N_{\J_R}(f\e_1)=f^{-1}\e_1+S$ for all $f\in F_1\setminus\{0\}$, it follows that $\alpha(S)=S$. Let $\alpha|_S:\J(S)\longrightarrow \J(S)$ be the restriction of $\alpha$ on $S\setminus\{0\}$. By assumption, $\alpha|_S\in\Inn(\J_S)$. Hence we may further assume modulo $\Inn(\J_R)$ that $\alpha(x)=x$ for all $x\in S\setminus\{0\}$. Let $S_f=f\e_1+S$ for all $f\in F_1\setminus\{0\}$. Since $\alpha$ fixes $S_f$, for each $x\in S$, there exists $\alpha_f(x)\in S$ such that $\alpha(f\e_1+x)=f\e_1+\alpha_f(x)$.

Let $f\in F\setminus\{0,\pm1\}$. If $x,y\in S\setminus\{0\}$, then $x\sim y$ if and only if $f\e_1+x\sim f\e_1+y$ if and only if $f\e_1+\alpha_f(x)\sim f\e_1+\alpha_f(y)$ if and only if $\alpha_f(x)\sim\alpha_f(y)$. Hence $\alpha_f$ induces an automorphism on $\J_S$. On the other hand, if $x,y\in S\setminus\{0\}$, then $x\sim y$ if and only if $x\sim f\e_1+y$ if and only if $x\sim f\e_1+\alpha_f(y)$ if and only if $x\sim\alpha_f(y)$ or $x=\alpha_f(y)$. Now it is easy to see that, if $x\not\sim\alpha_f(x)$, then $\alpha_f(N_{\J_S}(x))=N_{\J_S}(x)$ and if $x\sim\alpha_f(x)$, then $\alpha_f(N_{\J_S}[x])=N_{\J_S}[x]$. Hence, by Lemmas \ref{openneighborhood} and \ref{closedneighborhood}, $\alpha_f(x)=x$ except probably when $\alpha_f(N_{\J_S}(x))=N_{\J_S}(x)$ and $S\cong\Z_2\oplus \Z_2$, or $S$ is field. If $S\cong\Z_2\oplus\Z_2$, then since $F+\e_i\subseteq N_{\J_R}[\e_i]$, it follows that $\alpha(F+\e_i)\subseteq (F+\e_i)\cup(F+\e_2+\e_3)$, for $i=2,3$. But $\deg_{\J_R}(x)<\deg_{\J_R}(y)$ for all $x\in F+\e_i$ and $y\in F+\e_2+\e_3$, which implies that $\alpha(F+\e_i)=F+\e_i$, for $i=2,3$. Moreover, $\alpha(F+\e_2+\e_3)=F+\e_2+\e_3$. Thus $\alpha$ stabilizes $N_{\J_R}[f\e_1+\e_2]$, $N_{\J_R}[f\e_1+\e_3]$ and $N_{\J_R}[f\e_1+\e_2+\e_3]$, from which by Lemma \ref{closedneighborhood}, it follows that $\alpha$ fixes all the elements $f\e_1+\e_2$, $f\e_1+\e_3$ and $f\e_1+\e_2+\e_3$, as required. Now, suppose that $S$ is a field and $\alpha_f(x)\neq x$. If $\alpha_f(N_{\J_S}[x])=N_{\J_S}[x]$, then $\alpha_f(x)=x^{-1}$. But then $\alpha_f(x^{-1})=x$ and since $x\sim f\e_1+x^{-1}$, we should have $x\sim f\e_1+\alpha_f(x^{-1})$, a contradiction. Thus $\alpha_f(N_{\J_S}(x))=N_{\J_S}(x)$, which implies that $x=\pm1$ and $\alpha_f(x)=-x$. Since $x\sim f\e_1+x$ it follows that $x\sim f\e_1+\alpha_f(x)$, which is a contradiction. Therefore, in all cases $\alpha_f(x)=x$ that is $\alpha(f\e_1+x)=f\e_1+x$ for all $x\in S$.

Finally, suppose that $f=\pm1$. Let
\[\overline{N}_{\J_S}(x)=\begin{cases}N_{\J_S}(x),&1-x^2\in U(S),\\\\N_{\J_S}[x],&1-x^2\notin U(S),\end{cases}\]
for each $x\in S\setminus\{0\}$. Then 
\[N_{\J_R}(x)=N_{\J_S}(x)\cup\bigcup_{f'\in F\setminus\{0\}}\left(f'\e_1+\overline{N}_{\J_S}(x)\right).\]
Since $\alpha(x)=x$, it follows that $\alpha(N_{\J_R}(x))=N_{\J_R}(x)$. Hence 
\begin{equation}\label{fixedcosets}
\alpha(f'\e_1+\overline{N}_{\J_S}(x))=f'\e_1+\overline{N}_{\J_S}(x)
\end{equation}
for each $f'\in F_1\setminus\{0\}$. Now, we have
\[N_{\J_R}[f\e_1+x]=(f\e_1+S)\cup\bigcup_{f'\in F_1\setminus\{f\}}\left(f'\e_1+\overline{N}_{\J_S}(x)\right)\]
so that, by using \eqref{fixedcosets}, $\alpha(N_{\J_R}[f\e_1+x])=N_{\J_R}[f\e_1+x]$. Thus $N_{\J_R}[\alpha(f\e_1+x)]=N_{\J_R}[f\e_1+x]$ and by Lemma \ref{closedneighborhood}, $\alpha(f\e_1+x)=f\e_1+x$. Therefore, $\alpha=I$ is the identity automorphism and the proof is complete.
\end{proof}

For each finite field $F$, we let $\ve_F=0$ if $|F|$ is even and $\ve_F=1$ if $|F|$ is odd. By \cite[Theorem 2.2]{aa-ae-mfdg}, $\J_F$ is a union of $\ve_F+1$  isolated vertices together with $(|F|-2-\ve_F)/2$ disjoint edges. Hence, we have the following result.
\begin{corollary}\label{automorphismstructure}
Let $R=F_1^{m_1}\oplus\cdots\oplus F_k^{m_k}$ be the decomposition of the semi-simple ring $R$ into finite fields $F_i$, where $F_1,\ldots,F_k$ are distinct fields. Then
\[\Aut(\J_R)=\Aut(\J_{F_1})\wr S_{m_1}\times\cdots\times\Aut(\J_{F_k})\wr S_{m_k},\]
in which 
\[\Aut(\J_{F_i})=\left(\Z_2\wr S_{\frac{|F_i|-2-\ve_{F_i}}{2}}\right)\times\Z_{\ve_{F_i}+1}\]
if $|F_i|\geq4$, and $\Aut(\Z_3)=\Z_2$ and $\Aut(\Z_2)=1$ is the trivial group.
\end{corollary}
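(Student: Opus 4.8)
The plan is to read the group structure off directly from Theorem \ref{automorphism}, which already parametrizes every element of $\Aut(\J_R)$ as a map $\alpha(\sum_{i=1}^n a_i\e_i)=\sum_{i=1}^n\sigma_i(a_i)\e_{\pi(i)}$ with $\pi\in\Pi$ and $\sigma_i\in\Sigma_i$, and then to repackage this data blockwise according to the distinct fields. First I would regroup the $n=m_1+\cdots+m_k$ coordinates: the blocks $A_i=\{j:|F_j|=|F_i|\}$ are exactly the sets of coordinates on which one distinct field $F_\ell$ is repeated $m_\ell$ times, so the group $\Pi$ of permutations stabilizing every $A_i$ is the Young subgroup $S_{m_1}\times\cdots\times S_{m_k}$.

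Next I would identify each $\Sigma_i$ with $\Aut(\J_{F_i})$. A permutation $\sigma$ of $F_i$ lies in $\Sigma_i$ precisely when it fixes $0$, stabilizes $\{\pm1\}$, and carries the inversion-pairs $\{f,f^{-1}\}$ onto inversion-pairs (this last reading is forced by the base case $n=1$ of Theorem \ref{automorphism}). Restricting to $F_i\setminus\{0\}=V(\J_{F_i})$ and recalling that in a field two distinct vertices of $\J_F$ are adjacent iff $xy=1$, i.e. iff they are mutually inverse, this says exactly that $\sigma$ preserves the edge set and the set of isolated vertices of $\J_{F_i}$; conversely every graph automorphism has this form, so $\Sigma_i=\Aut(\J_{F_i})$.

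With these two identifications in hand, I would verify that the composition law of Theorem \ref{automorphism} is the multiplication of a direct product of wreath products. Composing $\alpha$ with a second map $\beta(\sum a_i\e_i)=\sum\tau_i(a_i)\e_{\rho(i)}$ yields the permutation $\pi\rho$ together with coordinate maps $i\mapsto\sigma_{\rho(i)}\circ\tau_i$; thus the $\pi$-part re-indexes the coordinate maps $\sigma_i$ by the relevant permutation. On each block this is precisely the defining multiplication of $\Aut(\J_{F_\ell})\wr S_{m_\ell}=\Aut(\J_{F_\ell})^{m_\ell}\rtimes S_{m_\ell}$, and since distinct blocks share no coordinates they do not interact, giving $\Aut(\J_R)=\prod_{\ell=1}^{k}\Aut(\J_{F_\ell})\wr S_{m_\ell}$.

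Finally I would compute $\Aut(\J_F)$ for a single field from the quoted description of $\J_F$ as $\ve_F+1$ isolated vertices (namely $\pm1$) together with $(|F|-2-\ve_F)/2$ disjoint edges (the pairs $\{f,f^{-1}\}$). Since degree-zero and degree-one vertices cannot be interchanged, an automorphism splits as a permutation of the isolated vertices, giving $S_{\ve_F+1}=\Z_{\ve_F+1}$, times an automorphism of a disjoint union of $m=(|F|-2-\ve_F)/2$ edges, whose group is $\Z_2\wr S_m$ (swap the two endpoints of each edge and permute the edges); for $|F|=2,3$ this degenerates to the trivial group and to $\Z_2$ as stated. I expect the only delicate point to be the bookkeeping in the composition step, namely checking that the $\pi$-action permutes the factors $\Sigma_i$ correctly so that each block genuinely assembles into a wreath product rather than a plain semidirect product; the remaining steps are direct translations of Theorem \ref{automorphism} and of the graph description of $\J_F$.
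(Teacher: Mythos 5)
Your proposal is correct and follows the paper's own route: the paper derives this corollary directly from Theorem \ref{automorphism} together with the observation that $\J_F$ is $\ve_F+1$ isolated vertices plus $(|F|-2-\ve_F)/2$ disjoint edges, exactly as you do. You merely make explicit the bookkeeping the paper leaves implicit (that $\Pi$ is the Young subgroup $S_{m_1}\times\cdots\times S_{m_k}$, that $\Sigma_i=\Aut(\J_{F_i})$ under the pairs-to-pairs reading of the theorem's condition, and that the composition law $(\pi;\sigma_i)(\rho;\tau_i)=(\pi\rho;\sigma_{\rho(i)}\circ\tau_i)$ is wreath-product multiplication), all of which checks out.
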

\begin{remark}
Let $R=\Z_2\oplus\cdots\oplus\Z_2$ ($n$ times). Then, by Corollary \ref{automorphismstructure}, $\Aut(\J_R)\cong S_n$. Note that $\J_R$ is the complement of the zero-divisor graph of $R$ and it is known, say from \cite{am}, that $\Aut(\J_R)\cong S_n$. Hence $\J_R$ is an example of graphs with symmetric automorphism group, in which the number of vertices exceeds that of \cite{mwl}.
\end{remark}
\begin{remark}
Let $R=F\oplus\cdots\oplus F$ ($n$ times), where $F$ is a field of order $3$ or $4$. Then, by Corollary \ref{automorphismstructure}, $\Aut(\J_R)\cong \Z_2\wr S_n$. Hence, by \cite{fh}, $\Aut(\J_R)\cong\Aut(Q_n)$, where $Q_n$ denotes the hypercube of dimension $n$.
\end{remark}

In the remainder of this section, we shall seek for conditions under which two finite rings are isomorphic if and only if the automorphism group of their Jacobson graphs are isomorphic.
\begin{lemma}\label{wreathproductdecomposition}
Let $G$ be a finite group and $W=G\wr S_n$. Then $W$ is decomposable if and only if $G$ has a non-trivial central  direct factor $H$ such that $(|H|,n)=1$, in which case $H$ is a direct factor of $W$.
\end{lemma}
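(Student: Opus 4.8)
The plan is to treat the two implications separately, and throughout I assume $n\geq 2$ (for $n=1$ the statement degenerates, since then $W=G$). First I record the two structural facts I will use repeatedly: the base group $G^{n}$ is normal in $W$ with $W/G^{n}\cong S_{n}$, and for $n\geq 2$ the centre of $W$ is the diagonal $\wh{Z(G)}=\{(z,\ldots,z):z\in Z(G)\}\cong Z(G)$.

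For $(\Leftarrow)$, suppose $G=H\times K$ with $1\neq H\leq Z(G)$ and $(|H|,n)=1$. Writing each coordinate of an element of $W$ as $(h_i,k_i)\in H\times K$, I would define $\psi\colon W\longrightarrow H$ by $\psi\bigl((g_1,\ldots,g_n),\sigma\bigr)=h_1h_2\cdots h_n$; since $H$ is central this is a well-defined epimorphism. Its restriction to the diagonal $\wh H$ is the map $h\mapsto h^{n}$, which is an automorphism of $H$ because $(|H|,n)=1$. Hence $\wh H\cap\Ker\psi=1$ and $\wh H\cdot\Ker\psi=W$, and as $\wh H$ is central this gives $W=\wh H\times\Ker\psi$, so $H\cong\wh H$ is a direct factor of $W$.

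For $(\Rightarrow)$, let $W=A\times B$ be a non-trivial decomposition. The images of $A,B$ in $W/G^{n}\cong S_{n}$ commute and generate $S_{n}$; since $S_{n}$ is directly indecomposable this forces one of them, say $B$, into the base group, so $B\leq G^{n}$ (the case $n=2$, where both images could be all of $S_{2}$, needs a short separate check, which I would dispatch using $W^{\mathrm{ab}}\cong G^{\mathrm{ab}}\times\Z_2$). Then $G^{n}=A_{0}\times B$ with $A_{0}=A\cap G^{n}$, and projecting to the coordinates gives normal subgroups $K=p_{i}(A_{0})$ and $N=p_{i}(B)$ of $G$ (independent of $i$ by $S_{n}$-invariance) with $[K,N]=1$ and $G=KN$; moreover $A_{0}\leq K^{n}$, $B\leq N^{n}$, and $K\cap N\leq Z(G)$. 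The crux — and the step I expect to be the main obstacle — is to show that $B$ is central. To this end I would choose a lift $a=(c,\tau)\in A$ of the transposition $\tau=(1\,2)$; since $A$ centralises $B$, a direct computation shows that the coordinate swap $\tau$ acts on $B$ as conjugation by the fixed element $\beta\in B$ obtained as the $B$-component of $c$ in $G^{n}=A_{0}\times B$. In particular this automorphism of $B$ preserves the support of each element. On the other hand, writing $A_{0}'=A_{0}\cap N^{n}\leq (K\cap N)^{n}\leq Z(G)^{n}$, one gets $N^{n}=A_{0}'\times B$ with $A_{0}'$ central, whence $B'=(N^{n})'=(N')^{n}\leq B$; thus $B$ contains the single-coordinate elements $(x,1,\ldots,1)$ for $x\in N'$. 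But $\tau$ sends such an element to $(1,x,1,\ldots,1)$, changing its support, contradicting support-preservation unless $x=1$. Hence $N'=1$, so $B\leq N^{n}$ is abelian, and therefore $B\leq C_{W}(A)\cap C_{W}(B)=Z(W)=\wh{Z(G)}$; that is, $B=\wh H$ for some non-trivial $H\leq Z(G)$.

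It then remains to recover a decomposition of $G$ and the coprimality from $B=\wh H$. The projection $q\colon G^{n}\longrightarrow B\cong H$ along $A_{0}$ is an $S_{n}$-equivariant epimorphism with kernel $A_{0}$, so composing with the $i$-th coordinate inclusion $\iota_{i}$ yields a homomorphism $\mu\colon G\longrightarrow H$ with $q\circ\iota_{i}=\mu$ for all $i$. Evaluating $q$ on a diagonal element $(h,\ldots,h)$ in two ways gives $\mu(h)^{n}=h$ for every $h\in H$; since $H$ is finite abelian this forces $h\mapsto h^{n}$ to be bijective, so $(|H|,n)=1$ and $\mu|_{H}\in\Aut(H)$. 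Consequently $H\cap\Ker\mu=1$ and $G=H\cdot\Ker\mu$, i.e. $G=H\times\Ker\mu$ is a decomposition with $H\leq Z(G)$, exactly the required central direct factor; and $H\cong\wh H=B$ is the asserted direct factor of $W$. The whole argument hinges on the centrality of $B$, established above by the support argument, which is where essentially all the work lies.
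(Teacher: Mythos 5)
Your route is genuinely different from the paper's: the paper simply invokes Dixon--Fournelle (Theorem 2.8 of the cited reference), which already says $G\wr S_n$ is decomposable iff $G$ has a non-trivial abelian direct factor whose elements have unique $n$th roots, and the paper's only actual work is translating ``unique $n$th roots'' into $(|H|,n)=1$; you instead prove everything from scratch. Most of your argument checks out. The $(\Leftarrow)$ direction via $\psi(g_1,\ldots,g_n;\sigma)=h_1\cdots h_n$ is complete and correct, and also yields the final clause that $H$ is a direct factor of $W$. In the $(\Rightarrow)$ direction, once $B\leq G^n$ is secured, the chain is sound: $G^n=A_0\times B$ by Dedekind's law, the swap $\tau$ acts on $B$ as conjugation by the $B$-component $\beta$ of $c$ (using $[A_0,B]=1$), conjugation by an element of the base preserves supports, $N^n=A_0'\times B$ with $A_0'\leq Z(G)^n$ gives $(N')^n=B'\leq B$, and the support contradiction kills $N'$, making $B$ abelian, hence central, hence a diagonal subgroup $\wh{H}$; the $S_n$-invariant projection $q$ then produces $\mu$ with $\mu(h)^n=h$, giving both $(|H|,n)=1$ and $G=H\times\Ker\mu$. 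This is all fine for $n\geq3$, where indecomposability of $S_n$ with trivial centre forces one factor into the base.

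The genuine gap is the case $n=2$ with both images $\overline{A}=\overline{B}=S_2$, which you explicitly defer and propose to ``dispatch using $W^{\mathrm{ab}}\cong G^{\mathrm{ab}}\times\Z_2$.'' That hint cannot work by itself: there is no abelianization-level obstruction to both factors of a direct decomposition covering a common $\Z_2$ quotient, as the decomposition $\Z_2\times\Z_2=\langle(1,1)\rangle\times\langle(0,1)\rangle$ (both factors mapping onto the second coordinate) already shows. And the case cannot be discarded, since the paper applies the lemma with $n=2$ (e.g.\ when some $m_i=2$, and for $\Z_2\wr S_{\wh{m}}$ with $\wh{m}=2$). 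The case is in fact patchable, but by a different argument than you suggest: picking $a=(c;\tau)\in A$ and $b=(d;\tau)\in B$, the relations $[a,B_0]=1$ and $[b,A_0]=1$ force $B_0$ and $A_0$ to be ``twisted diagonals'' ($\beta_2=c_1^{-1}\beta_1c_1$ for $\beta\in B_0$, and similarly for $A_0$), whence $|A_0|=|K|$, $|B_0|=|N|$ and $|K||N|=|G|^2/2$; since $K,N\leq G$, exactly one of them equals $G$ and the other is a subgroup of index $2$ centralizing it, so $Z(G)$ has index at most $2$ and $G$ is abelian. Even after this reduction you still owe an argument that for an abelian $2$-group $G$ the wreath product $G\wr S_2$ is indecomposable (equivalently, that the both-onto configuration yields the required odd central factor when $G$ is abelian), so as written the proof is incomplete precisely at $n=2$, and complete everywhere else.
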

\begin{proof}
Suppose $W$ is decomposable. By \cite[Theorem 2.8]{mrd-taf}, $G$ has a non-trivial Abelian direct factor $H$ such that its elements each of which has unique $n$th root. Suppose on the contrary that $\gcd(|H|,n)\neq1$ and $p$ be a common prime divisor of $|H|$ and $n$. Let $x$ be a $p$-element of largest order $p^k$ in $H$ and $y$ be the unique $n$th root of $x$ in $H$. Then $p\big||y|$, which implies that $|y^n|<p^k$, a contradiction. The converse follows from \cite[Theorem 2.8]{mrd-taf}.
\end{proof}

In the remaining of this section, $\F_m$ stands for the finite field of order $m$ and $\wh{m}$ denotes the number $(m-2-\ve_m)/2$ for each prime power $m$, in which $\ve_m=m-2[m/2]$ denotes the parity of $m$. If $p$ is a prime, then $e_p(n)$ is the exponent of $p$ in $n$, that is $p^{e_p(n)}|n$ but $p^{e_p(n)+1}\nmid n$. Bertrand's postulate states that there always exists a prime $p$ such that $n<p<2n-2$ for all $n>3$. Also, a weaker formulation of Bertrand's postulate states that there always exists a prime $p$ such that $n<p<2n$ for all $n>1$. Hence, if $\p_n$ denotes the largest prime not exceeding $n$, then $e_{\p_n}(n!)=1$. Also, it is known that the following equality holds for each natural number $n$ and prime $p$,
\[e_p(n!)=\frac{n-S_p(n)}{p-1},\]
where $S_p(n)$ denotes the sum of digits of $n$ when $n$ is written in base $p$.

By Lemma \ref{wreathproductdecomposition}, we have
\[\Aut(\J_{\F_m})\cong\Z_{\ve_m+1}\times\Z_{\ve_{\wh{m}}+1}\times H_m,\]
where $|H_m|=2^{\wh{m}-\ve_{\wh{m}}}\wh{m}!$, and
\[\Aut(\J_{\F_m})\wr S_n\cong\begin{cases}S_n,&m=2,\\\Z_{\ve_n+1}\times K_n,&m=3,4,\\\Z_{\ve_{\wh{m}}+1}^{\ve_n}\times\Z_{\ve_m+1}^{\ve_n}\times L_{m,n},&m\geq5,\end{cases}\]
where $|K_n|=2^{n-\ve_n}n!$ and $|L_{m,n}|=2^{-\ve_n(\ve_{\wh{m}}+\ve_m)}\left(2^{\wh{m}+\ve_m}\wh{m}!\right)^nn!$.
\begin{lemma}\label{isomorphicindecomposablefactors}
Let $m,m',n,n',k$ be two natural numbers greater than one. Then
\begin{itemize}
\item[(1)]$S_k\cong H_m$, if and only if $(k,\wh{m})=(4,3)$,
\item[(2)]$S_k\cong K_n$, if and only if $(k,n)=(4,3)$,
\item[(3)]$S_k\not\cong L_{m,n}$,
\item[(4)]$H_m\cong H_{m'}$, if and only if $\wh{m}=\wh{m}'$,
\item[(5)]$H_m\cong K_{n'}$, if and only if $\wh{m}=n'$,
\item[(6)]$H_m\not\cong L_{m',n'}$,
\item[(7)]$K_n\cong K_{n'}$, if and only if $n=n'$,
\item[(8)]$K_n\not\cong L_{m',n'}$,
\item[(9)]$L_{m,n}\cong L_{m',n'}$, if and only if $(m,n)=(m',n')$.
\end{itemize}
\end{lemma}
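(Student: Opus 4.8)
I would organize the nine comparisons around two structural facts and two isomorphism invariants. First, by Lemma~\ref{wreathproductdecomposition}, both $H_m$ and $K_n$ are the unique indecomposable direct factor of the wreath product $\Z_2\wr S_r$ for $r=\wh{m}$ and $r=n$ respectively; write $B_r$ for this common group, so that $H_m=B_{\wh{m}}$, $K_n=B_n$ and $|B_r|=2^{r-\ve_r}r!$. Likewise $L_{m,n}$ is the indecomposable factor of $\Aut(\J_{\F_m})\wr S_n$ left after the displayed central cyclic factors are split off. The key preliminary observation is that $r\mapsto|B_r|$ is strictly increasing, since $|B_{r+1}|/|B_r|=2^{2\ve_r}(r+1)>1$ (using $\ve_{r+1}=1-\ve_r$). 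Parts (4), (5) and (7) fall out at once: each of $H_m,K_n$ depends only on the single parameter $r\in\{\wh{m},n\}$, so an isomorphism forces equal orders and hence equal $r$ by monotonicity, while equality of the parameter makes the two groups literally identical.

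For the rest I would use two invariants in tandem: the multiset of non-abelian composition factors, and the group order read through Legendre's formula for $e_2$ and through the largest prime $\p_N\mid N!$ (for which $e_{\p_N}(N!)=1$ by Bertrand). The composition data are easy to tabulate: $S_k$ contributes the single factor $A_k$ precisely when $k\geq5$; $B_r$ contributes $A_r$ precisely when $r\geq5$; and $L_{m,n}$ contributes $n$ copies of $A_{\wh{m}}$ when $\wh{m}\geq5$ together with one copy of $A_n$ when $n\geq5$ (deleting abelian central factors changes no composition factor).

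For (1) and (2), an isomorphism $S_k\cong B_r$ gives $k!=2^{r-\ve_r}r!$. Since the right-hand side exceeds $r!$ we must have $k>r$, whence $(r+1)\cdots k$ is a power of $2$; as any block of at least two consecutive integers each exceeding $1$ contains an odd number exceeding $1$, this forces the block to be a single term, so $k=r+1$ and $r+1=2^{r-\ve_r}$, whose only solution with $r>1$ is $r=3$, $k=4$. The converse is the exceptional isomorphism $B_3\cong S_4$, which I would check directly: here $B_3$ is the even-sign subgroup $\Z_2^2\rtimes S_3$, i.e. the Weyl group of type $D_3=A_3$, which is $S_4$. Part (3) then follows from the composition invariant: a symmetric group has at most one non-abelian composition factor, while $L_{m,n}$ carries at least $n\geq2$ copies of $A_{\wh{m}}$ unless $\wh{m}\leq4$; in that remaining case an isomorphism with some $S_k$ forces $k\geq5$, hence $n\geq5$ and $A_k\cong A_n$, so $k=n$, and $k!=|L_{m,n}|$ then collapses to the impossible identity $(2^{\wh{m}+\ve_m}\wh{m}!)^{n}=2^{\ve_n(\ve_{\wh{m}}+\ve_m)}$. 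The finitely many fully solvable configurations ($k\leq4$ and $\wh{m},n\leq4$) are dispatched by a direct comparison of orders.

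Parts (6) and (8) run exactly as (3): matching the single factor $A_{\wh{m}}$ (resp.\ $A_n$) against the composition multiset of $L_{m',n'}$ forces the base parameter $\wh{m}'\leq4$ together with $\wh{m}=n'$ (resp.\ $n=n'$), after which the surviving order identity is excluded $2$-adically, using $m'\geq5$ to control the powers of $2$. The genuinely laborious part is (9), and this is where I expect the main obstacle. The composition multiset recovers only the unordered data $\{\wh{m},n\}$, the multiplicities separating the wreath exponent $n$ from the base parameter $\wh{m}$ in the non-solvable range; the delicate point is that the map $m\mapsto\wh{m}$ is two-to-one (one even and one odd $m$ share each value of $\wh{m}$), so composition factors never see $m$ at all. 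To finish one must show that no two distinct pairs $(m,n)$ yield equal orders, which I would do with Legendre's formula: the $2$-adic valuation $e_2(|L_{m,n}|)$ is strictly increasing in $\ve_m$ once $\wh{m},n$ are fixed, so it recovers $\ve_m$ and hence $m=2\wh{m}+2+\ve_m$, and the same computation shows, for example, that the $\wh{m}=3$ and $\wh{m}=4$ families never collide. The residual fully solvable range $\wh{m},n\leq4$ is once more a bounded finite verification.
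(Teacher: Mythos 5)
Your proposal is correct, but it routes the argument quite differently from the paper, which proves almost everything by pure order arithmetic: it equates factorials such as $k!=2^l\wh{m}!^nn!$ and then grinds through Bertrand's postulate ($e_{\p_N}(N!)=1$) and Legendre's formula $e_p(N!)=(N-S_p(N))/(p-1)$ to force contradictions case by case, invoking composition factors exactly once --- to kill the single genuine order coincidence it uncovers in part (9), namely $\wh{m}=\wh{m}'=2$, $(n,n')=(7,8)$, where $(\Z_2\wr S_2\times\Z_2)\wr S_7$ and $\Z_2\times(\Z_2\wr S_2)\wr S_8$ have equal orders but simple sections $A_7$ versus $A_8$. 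You invert these roles: your identification $H_m\cong K_{\wh{m}}\cong B_{\wh{m}}$ (a one-parameter family, which the paper never makes explicit --- it proves (5) ``similarly to (4)'') makes (4), (5), (7) immediate from monotonicity of $|B_r|$, where the paper instead argues that two factorial expressions can only differ by one step and reaches absurdities like $\wh{m}=2^{-2}$; and your use of the multiset of non-abelian composition factors as the primary invariant pins down $\wh{m}$ and $n$ in (3), (6), (8), (9) up front (multiplicity $n\geq2$ of $A_{\wh{m}}$ versus multiplicity one of $A_n$, with the edge case $n=\wh{m}$ still determined), leaving only small $2$-adic and $3$-adic order comparisons to recover $\ve_m$ and to rule out the bases $\wh{m}'\leq2$, exactly the point where $m'\geq5$ enters. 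In particular, the paper's exceptional $(7,8)$ pair never needs to be discovered in your setup, since $A_7\not\cong A_8$ falls out of the invariant automatically. The trade-off: the paper's computation is self-contained arithmetic and locates precisely where orders collide (cf.\ the acknowledgment to Neumann), while yours is shorter and more robust but leans on the standard fact that $A_k\cong A_{k'}$ forces $k=k'$ for $k,k'\geq5$ and defers several finite solvable-range checks (e.g.\ $k\leq4$ against $|L_{m,n}|\geq32$, and the bounded grid $\wh{m},\wh{m}',n,n'\leq4$) that you assert rather than perform; these do check out. Two small wording repairs: your parenthetical should say that deleting abelian central direct factors changes no \emph{non-abelian} composition factor (it certainly deletes abelian ones), and for odd $r$ the claim that $B_r$ is \emph{the} well-defined complementary factor of $\Z_2$ in $\Z_2\wr S_r$ silently uses Krull--Remak--Schmidt (or cancellation for finite groups), which is available in the paper's framework and worth citing at that point.
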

\begin{proof}
(1) If $S_k\cong H_m$, then $k!=2^{\wh{m}-\ve_{\wh{m}}}\wh{m}!$. Then $k>\wh{m}$, which implies that $\wh{m}=k-1$. Thus $k=2^{k-1-\ve_{k-1}}=2^{k-2}$, which holds only if $k=4$. On the other hand, $\Z_2\wr S_3\cong\Z_2\times S_4$ and the result follows.

(2) It is similar to case (1).

(3) If $S_k\cong L_{m,n}$, then $k!=2^l\wh{m}!^nn!$ for $l=(\wh{m}+\ve_m)n-\ve_n(\ve_{\wh{m}}+\ve_m)$, which implies that $k>n$. Since $e_{\p_k}(k!)=1$, it follows that $e_{\p_k}(\wh{m})=0$ and $e_{\p_k}(n!)=1$. Hence $\wh{m}<\p_k\leq n<k\leq 2n$. If $\wh{m}>2$, then since $e_{\p_{\wh{m}}}(k!)=e_{\p_{\wh{m}}}(2^l\wh{m}!^nn!)$, we have
\[n=e_{\p_{\wh{m}}}(k!)-e_{\p_{\wh{m}}}(n!)=\frac{k-S_{\p_{\wh{m}}}(k)}{\p_{\wh{m}}-1}-\frac{n-S_{\p_{\wh{m}}}(n)}{\p_{\wh{m}}-1}.\]
Thus 
\[(\p_{\wh{m}}-1)n=k-n+S_{\p_{\wh{m}}}(n)-S_{\p_{\wh{m}}}(k)<n+S_{\p_{\wh{m}}}(n),\]
which implies that $n\leq(\p_{\wh{m}}-2)n<S_{\p_{\wh{m}}}(n)$, a contradiction. If $\wh{m}=2$, then $k!=2^{(3+\ve_m)n-\ve_m\ve_n}n!$. Hence $n=k-1$ and $k=2^{(3+\ve_m)k-3-2\ve_m}\geq2^{3k-5}$, which is impossible since $k>2$. Also, if $\wh{m}=1$, then $m=5$ and $k!=2^{2n-2\ve_n}n!$. Hence $n=k-1$ and $k=2^{2k-4}$, which is impossible.

(4) If $H_m\cong H_{m'}$, then $2^{\wh{m}-\ve_{\wh{m}}}\wh{m}!=2^{\wh{m}'-\ve_{\wh{m}'}}\wh{m}'!$. Suppose on the contrary that $\wh{m}\neq\wh{m}'$ and $\wh{m}>\wh{m}'$, say. Then $\wh{m}'=\wh{m}-1$ and we have
\[\wh{m}=2^{(\wh{m}-1-\ve_{\wh{m}-1})-(\wh{m}-\ve_{\wh{m}})}=2^{-2},\]
which is not an integer, a contradiction.

(5) It is similar to case (4).

(6) If $H_m\cong L_{m',n'}$, then it is easy to see that $\wh{m}>n'$ and the same as in (3) we can show that $\wh{m}'\leq2$. If $\wh{m}'=2$, then $2^{\wh{m}-\ve_{\wh{m}}}\wh{m}!=2^{(3+\ve_{m'})n'-\ve_{m'}\ve_{n'}}n'!$. Hence $n'=\wh{m}-1$ and 
\begin{align*}
\wh{m}&=2^{\left((3+\ve_{m'})(\wh{m}-1)-\ve_{m'}\ve_{\wh{m}-1}\right)-(\wh{m}-\ve_{\wh{m}})}\\
&=2^{(2+\ve_{m'})\wh{m}-3-2\ve_{m'}}\\
&\geq2^{2\wh{m}-5},
\end{align*}
which is impossible since $\wh{m}>2$. Also, if $\wh{m}'=1$, then $m'=5$ and $2^{\wh{m}-\ve_{\wh{m}}}\wh{m}!=2^{2n'-2\ve_{n'}}n'!$. Hence $n'=\wh{m}-1$ and
\[\wh{m}=2^{\left(2(\wh{m}-1)-2\ve_{\wh{m}-1}\right)-(\wh{m}-\ve_{\wh{m}})}=2^{\wh{m}-4},\]
which is impossible.

(7) It is similar to case (4).

(8) It is similar to case (6).

(9) If $L_{m,n}\cong L_{m',n'}$, then $2^l\wh{m}!^nn!=2^{l'}\wh{m}'!^{n'}n'!$, where $l=(\wh{m}+\ve_m)n-\ve_n(\ve_{\wh{m}}+\ve_m)$ and $l'=(\wh{m}'+\ve_{m'})n'-\ve_{n'}(\ve_{\wh{m}'}+\ve_{m'})$. If $n=n'$, then \[2^{(\wh{m}+\ve_m)n-\ve_n(\ve_{\wh{m}}+\ve_m)}\wh{m}!^n=2^{(\wh{m}'+\ve_{m'})n-\ve_n(\ve_{\wh{m}'}+\ve_{m'})}\wh{m}'!^n.\]
If $\wh{m}\neq\wh{m}'$ and $\wh{m}>\wh{m}'$, say, then $\wh{m}'=\wh{m}-1$ and we have
\begin{align*}
\wh{m}^n&=2^{\left((\wh{m}-1+\ve_{m'})n-\ve_n(\ve_{\wh{m}-1}+\ve_{m'})\right)-\left((\wh{m}+\ve_m)n-\ve_n(\ve_{\wh{m}}+\ve_m)\right)}\\
&=2^{(\ve_{m'}-\ve_m-1)n-\ve_n(1+\ve_{m'}-\ve_m)}\leq1,
\end{align*}
which is a contradiction. Thus $\wh{m}=\wh{m}'$ and consequently $m=m'$. Thus we may assume that $n\neq n'$ and without loss of generality $n<n'$. First suppose that $\wh{m}>2$. Since $e_{\p_{\wh{m}}}(2^l\wh{m}!^nn!)=e_{\p_{\wh{m}}}(2^{l'}\wh{m}'!^{n'}n'!)$, we obtain
\[n=n'e_{\p_{\wh{m}}}(\wh{m}'!)+e_{\p_{\wh{m}}}(n'!)-e_{\p_{\wh{m}}}(n!)\geq n'e_{\p_{\wh{m}}}(\wh{m}'!).\]
Thus $e_{\p_{\wh{m}}}(\wh{m}'!)\leq n/n'<1$ and consequently $e_{\p_{\wh{m}}}(\wh{m}'!)=0$. Hence $\wh{m}'<\p_{\wh{m}}\leq\wh{m}$. Now, we have
\[n=e_{\p_{\wh{m}}}(n'!)-e_{\p_{\wh{m}}}(n!)=\frac{n'-S_{\p_{\wh{m}}}(n')}{\p_{\wh{m}}-1}-\frac{n-S_{\p_{\wh{m}}}(n)}{\p_{\wh{m}}-1}.\]
Thus 
\[n'=(\p_{\wh{m}}-1)n+S_{\p_{\wh{m}}}(n')+n-S_{\p_{\wh{m}}}(n)>(\p_{\wh{m}}-1)n\geq2n.\]

On the other hand, $e_{\p_{n'}}(2^l\wh{m}!^nn!)=e_{\p_{n'}}(2^{l'}\wh{m}'!^{n'}n'!)$ and by invoking Bertrand's postulate, if follows that $ne_{\p_{n'}}(\wh{m}!)=n'e_{\p_{n'}}(\wh{m}'!)+1$. Thus $e_{\p_{n'}}(\wh{m}!)\neq0$ and consequently $e_{\p_{n'}}(\wh{m}'!)\neq0$. Also, by Bertrand's postulate, we have	
\[\frac{\wh{m}-S_{\p_{n'}}(\wh{m})}{\wh{m}'-S_{\p_{n'}}(\wh{m}')}=\frac{e_{\p_{n'}}(\wh{m}!)}{e_{\p_{n'}}(\wh{m}'!)}>\frac{n'}{n}>\p_{\wh{m}}-1\geq\frac{\wh{m}}{2}.\]
Thus
\[\wh{m}'-S_{\p_{n'}}(\wh{m}')<\frac{2(\wh{m}-S_{\p_{n'}}(\wh{m}))}{\wh{m}}=2-2\frac{S_{\p_{n'}}(\wh{m})}{\wh{m}}<2.\]
Hence $\wh{m}'-S_{\p_{n'}}(\wh{m}')\leq1$. Since $\wh{m}'-S_{\p_{n'}}(\wh{m}')=(\p_{n'}-1)e_{\p_{n'}}(\wh{m}'!)$, it follows that $e_{\p_{n'}}(\wh{m}'!)=0$, which is a contradiction. 

Thus $\wh{m}\leq2$. If $\wh{m}=2$, then
\[2^{(3+\ve_m)n-\ve_m\ve_n}n!=2^{(\wh{m}'+\ve_{m'})n'-\ve_{n'}(\ve_{\wh{m}'}+\ve_{m'})}\wh{m}'!^{n'}n'!.\]
Hence $n=n'-1$ and $\wh{m}'\leq2$. If $\wh{m}'=2$, then $n'=2^{(\ve_m-\ve_{m'})n'-3-2\ve_m}$, which implies that $\ve_{m}=1$ and $\ve_{m'}=0$. Hence $n'=2^{n'-5}$, which holds only if $n'=8$ and consequently $n=7$. Then 
\[(\Z_2\wr S_2\times\Z_2)\wr S_7\cong\Z_2\times(\Z_2\wr S_2)\wr S_8,\]
which is impossible by Jordan-H\"{o}lder's theorem for the simple sections of the left hand side group are $\Z_2$ and $A_7$ while the simple sections of the right hand side group are $\Z_2$ and $A_8$. Also, if $\wh{m}'=1$, then $n'=2^{(2+\ve_m-\ve_{m'})n'-3-2\ve_m}\geq2^{2n'-5}$, which is impossible.

Finally, suppose that $\wh{m}=1$. Then $m=5$ and we have
\[2^{2n-2\ve_n}n!=2^{(\wh{m}'+\ve_{m'})n'-\ve_{n'}(\ve_{\wh{m}'}+\ve_{m'})}\wh{m}'!^{n'}n'!.\]
Hence $n=n'-1$ and $\wh{m}'=1$. Thus $m'=5$ so that $n'=2^{-4}$, which is not an integer. The proof is complete.
\end{proof}
\begin{theorem}
Let $R=F_1^{m_1}\oplus\cdots\oplus F_s^{m_s}$ and $R'={F'}_1^{n_1}\oplus\cdots\oplus {F'}_t^{n_t}$ be two finite semi-simple rings. 
\begin{itemize}
\item[(1)]If $m_i,n_j>1$ for $i=1,\ldots,s$ and $j=1,\ldots,t$, and either the number of fields of order $2$ in $R$ and $R'$ is not equal to $4$ or the number of fields of order $3$ and $4$ in $R$ and $R'$ is not equal to $3$, then $R\cong R'$ if and only if $\Aut(\J_R)\cong\Aut(\J_{R'})$.
\item[(2)]If all the fields $F_i$ and $F_j$ have the same parity of order different from $3$ and $4$, and either the number of fields of order $2$ in $R$ and $R'$ is not equal to $4$ or the number of fields of order $8$ and $9$ in $R$ and $R'$ is greater than $1$, then $R\cong R'$ if and only if $\Aut(\J_R)\cong\Aut(\J_{R'})$.
\end{itemize}
\end{theorem}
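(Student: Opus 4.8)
The forward implication is immediate, since $R\cong R'$ gives $\J_R\cong\J_{R'}$ and hence $\Aut(\J_R)\cong\Aut(\J_{R'})$. All the content is in the converse, and my plan is to reconstruct the isomorphism type of $R$ --- equivalently, the multiset of its field orders together with their multiplicities --- from the abstract group $\Aut(\J_R)$ alone. Using Corollary \ref{automorphismstructure} together with the refined decompositions displayed just before Lemma \ref{isomorphicindecomposablefactors}, I would first write $\Aut(\J_R)$ as a direct product of copies of $\Z_2$ and of the directly indecomposable blocks $S_a$ (coming from $\F_2^a$), $K_b$ (from $\F_3^b$ or $\F_4^b$), $L_{m,d}$ (from $\F_m^d$ with $m\ge5$ and $d\ge2$) and $H_m$ (from a single $\F_m$ with $m\ge5$); the indecomposability of these blocks is exactly what Lemma \ref{wreathproductdecomposition} supplies. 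Since a finite group satisfies both chain conditions, the Krull--Schmidt theorem makes the multiset of indecomposable direct factors an isomorphism invariant, so $\Aut(\J_R)\cong\Aut(\J_{R'})$ forces the multisets of blocks, and the number of cyclic $\Z_2$-factors, to agree for $R$ and $R'$.

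The next step is to read Lemma \ref{isomorphicindecomposablefactors} as a dictionary that turns the multiset of blocks back into ring data. Part (9) shows that an $L_{m,d}$-block determines $(m,d)$ uniquely, while parts (3), (6) and (8) keep the $L$-blocks disjoint from the $S$-, $H$- and $K$-types; this recovers every field of order $\ge5$ occurring with multiplicity at least $2$, together with that multiplicity. Parts (4), (5) and (7) then separate the $H$- and $K$-blocks and fix the remaining multiplicities, after which the number of leftover $\Z_2$-factors --- produced by the $\ve$-terms attached to odd multiplicities and odd field orders --- is used to settle the $\F_2$-contribution $S_a$. I would organise this as matching blocks one field-order at a time, beginning with the largest order, where part (9) already gives an unambiguous $L$-block.

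The heart of the matter, and the reason for the two families of side conditions, is the short list of exceptional coincidences recorded in Lemma \ref{isomorphicindecomposablefactors}: by parts (1) and (2) one has $S_4\cong K_3$ and $S_4\cong H_m$ precisely when $\wh m=3$, that is $m\in\{8,9\}$. An occurrence of the block $S_4$ is therefore ambiguous between four copies of $\F_2$, three copies of a field of order $3$ or $4$, and a single field of order $8$ or $9$. The hypotheses are tuned to forbid exactly the configurations that would create a confusable $S_4$. In case (1), where every multiplicity exceeds $1$ so that no $H_m$-block can occur, the only danger is $S_4\cong K_3$, and this is killed by the clause that the number of fields of order $2$ is not $4$ or the number of fields of order $3$ and $4$ is not $3$. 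In case (2) the common-parity requirement with orders different from $3$ and $4$ removes the $K_3$ source entirely and, since $\F_8$ and $\F_9$ have opposite parities, at most one of them can appear; the clause that the number of fields of order $8$ and $9$ exceeds $1$ then pushes that field into the unambiguous $L$-regime, leaving the $\F_2^4$ source of $S_4$, which the clause on the number of fields of order $2$ excludes.

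The step I expect to be most delicate is the bookkeeping of the fungible $\Z_2$-factors, since these are the only pieces not rigidly attached to a single field and must be allocated correctly while the blocks are being matched. A related point I would scrutinise is that a block $K_b$ records only its multiplicity $b$ and not whether the underlying field has order $3$ or $4$, as these two fields produce identical factors; so in case (1) the recovery of $R$ must rest on combining the $K$-blocks with the global $\Z_2$-count and the numerical side conditions, and confirming that this really does determine $R$ up to isomorphism is where I would concentrate the verification.
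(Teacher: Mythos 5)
Your strategy is exactly the paper's own: the published proof consists of two sentences appealing to the Krull--Remak--Schmidt theorem together with Corollary \ref{automorphismstructure} and Lemmas \ref{wreathproductdecomposition} and \ref{isomorphicindecomposablefactors}, and your block-matching argument is precisely the bookkeeping that this appeal leaves implicit. Your identification of the coincidences $S_4\cong K_3\cong H_8\cong H_9$ as the raison d'\^{e}tre of the numerical side conditions is faithful to the authors' intent, as is your observation that the hypotheses of (1) and (2) respectively banish the $H$-blocks and the $K$-blocks, so that the further cross-type coincidence $H_m\cong K_{\wh{m}}$ of Lemma \ref{isomorphicindecomposablefactors}(5) never arises between two rings subject to the \emph{same} part of the hypothesis. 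One caveat on part (2): for the parity clause to do its job you must read it jointly, i.e.\ all fields of $R$ \emph{and} of $R'$ share one common parity; under a per-ring reading, $R=\F_9$ and $R'=\F_8\oplus\F_2^2$ satisfy every hypothesis, are non-isomorphic, and yet $\Aut(\J_R)\cong\Aut(\J_{R'})\cong\Z_2^2\times S_4$, since $\Z_2\wr S_3\cong\Z_2\times S_4$ and the $S_2$ from $\F_2^2$ supplies the missing central $\Z_2$.

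The point you defer to ``verification,'' however, is not a gap you could have closed: it is a genuine defect of the statement that the paper's two-line proof passes over in silence. Since $\Aut(\J_{\F_3})\cong\Aut(\J_{\F_4})\cong\Z_2$, the groups $\Aut(\J_{\F_3^b})$ and $\Aut(\J_{\F_4^b})$ are literally the same group $\Z_2\wr S_b$ -- not merely matching $K_b$-blocks but identical central $\Z_2$-counts as well, because the factor $\Z_{\ve_b+1}$ in the decomposition depends only on $b$. Hence $R=\F_3^2$ and $R'=\F_4^2$ satisfy all hypotheses of part (1) (both multiplicities exceed $1$, and neither ring contains four fields of order $2$, so the ``either'' clause holds), have isomorphic automorphism groups, and are non-isomorphic rings. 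No amount of $\Z_2$-bookkeeping or appeal to the side conditions can repair this; part (1) becomes true only if one adds a hypothesis preventing fields of order $3$ and $4$ from being traded against each other, or weakens the conclusion to isomorphism up to exchanging $\F_3$ and $\F_4$. So your plan reproduces the paper's argument in full, and your instinct about where it is fragile is exactly right -- but the concentrated verification you propose would expose a counterexample rather than complete the proof.
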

\begin{proof}
We know, by Krull, Remak and Schmidt Theorem \cite[Theorem 3.3.8]{djsr}, that every finite group can be expressed, up to isomorphism, as a direct product of some indecomposable groups. Now, the result follows by using Corollary \ref{automorphismstructure} and Lemmas \ref{wreathproductdecomposition} and \ref{isomorphicindecomposablefactors}.
\end{proof}
\begin{acknowledgment}
The authors would like to thank Prof. Peter M. Neumann for pointing out that the groups in Lemma 3.6 have different orders except for few possibilities.
\end{acknowledgment}

\end{document}